\documentclass[11pt,a4paper]{article}
\usepackage[T1]{fontenc}
\usepackage{lmodern}
\usepackage[margin=25mm]{geometry}
\usepackage{amsmath,amssymb,amsthm,listings,booktabs,array}
\usepackage[expansion=false]{microtype}
\usepackage[hidelinks]{hyperref}
\hypersetup{pdftitle={A new lower bound for the sixth Schur number: S(6) >= 537},pdfauthor={Zhiyang Chen, Ligeng Zhu, Fan Long}}
\newtheorem{theorem}{Theorem}
\newtheorem{proposition}[theorem]{Proposition}
\newtheorem{corollary}[theorem]{Corollary}
\theoremstyle{definition}
\newtheorem{definition}[theorem]{Definition}

\newcommand{\fold}{\operatorname{f}}

\title{A new lower bound for the sixth Schur number: $S(6)\ge537$}
\author{
  Zhiyang Chen\\[3pt]
  {\small University of Toronto}\\
  {\small\texttt{zhiychen@cs.toronto.edu}}
  \and
  Ligeng Zhu\\[3pt]
  {\small NVIDIA Research}\\
  {\small\texttt{ligengz@nvidia.com}}
  \and
  Fan Long\\[3pt]
  {\small University of Toronto}\\
  {\small\texttt{fanl@cs.toronto.edu}}
}
\date{22 October 2026}

\begin{document}
\maketitle

\begin{abstract}
We give an explicit partition of $\{1,\ldots,537\}$ into six sum-free sets,
establishing $S(6)\ge537$. This improves the lower bound 536 of Fredricksen and
Sweet (2000). Known recursive constructions then give the improved lower bounds
$S(8)\ge5\,372$, $S(11)\ge204\,208$, $S(13)\ge2\,042\,082$, and
$R_6(3)\ge539$. We also report 131 pairwise
inequivalent partitions found by the search. We describe their structure and
provide the complete partitions in the supplementary material. These partitions
provide starting points for future searches and structural studies of Schur partitions.
\end{abstract}
\noindent\textbf{Keywords:} Schur numbers, sum-free partitions, Ramsey theory.

\section{Introduction}
A set $C$ of integers is \emph{sum-free} if there are no $x,y\in C$, not necessarily
distinct, with $x+y\in C$. The Schur number $S(k)$ is the largest $n$ for which
$\{1,\dots,n\}$ can be partitioned into $k$ sum-free sets. The values
$S(1),\dots,S(5)=1,4,13,44,160$ are known, the last one determined by
Heule~\cite{heule}. For $k=6$ the record has been $S(6)\ge536$ since 2000, when
Fredricksen and Sweet~\cite{fs} found it by a search over symmetric sum-free
partitions; it has been quoted as the best known bound ever since. We improve it
by one.

\begin{theorem}\label{thm:537}
$S(6)\ge 537$.
\end{theorem}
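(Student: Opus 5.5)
The statement is existential, so the proof will be a certificate. We exhibit an explicit partition of $\{1,\dots,537\}$ into six sets $C_1,\dots,C_6$ and verify that each $C_i$ is sum-free. Verification is finite and mechanical. For each $i$ and each pair $x\le y$ in $C_i$ with $x+y\le 537$, we check that $x+y\notin C_i$. There are at most about $537^2/2$ pairs per colour, so a short script (or a careful table) settles it. The proof therefore reduces to producing the partition and presenting it so that it can be checked.

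To find the partition, we would start from the structure that produced the 536 bound of Fredricksen and Sweet. We restrict to \emph{symmetric} partitions, where $x$ and $n+1-x$ receive the same colour. This symmetry is compatible with sum-freeness in a useful way: if $x+y=z$ in one colour, then the reflected triple satisfies a related relation. We would not impose symmetry rigidly for $n=537$, however. We would instead fix a symmetric skeleton on most of the interval, built by extending a known good 5-colouring of $\{1,\dots,160\}$ (or an $S(5)$-type template) in the way the classical recursion $S(k+1)\ge 3S(k)+1$ suggests. The remaining freedom is then encoded as a SAT instance: one Boolean variable $v_{x,c}$ per number and colour, at-least-one-colour clauses, and a clause $\neg v_{x,c}\vee\neg v_{y,c}\vee\neg v_{x+y,c}$ for every Schur triple. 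Colour symmetry is broken by fixing the colours of $1,2,4,\dots$ as is standard. This instance would then be handed to a CDCL solver, possibly with cube-and-conquer in the style of Heule's proof of $S(5)=160$.

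The main obstacle is the search itself. The unrestricted instance at $n=537$ is far too large to solve directly, and fully symmetric partitions appear to top out at 536, which is why the record stood. The first thing we would try is local search: take the many known 536-partitions and search for colourings of $537$ after relaxing the colours of a small window of elements, for example near the ends and near the "fold" point $n/2$, to break exact symmetry. This is the step where an improvement, if any exists nearby, would be found. If it fails, we would enlarge the relaxed window and move to stochastic local search over near-symmetric partitions, minimising the number of monochromatic triples.

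Once a solution is found, the write-up of the proof is simply to list the six colour classes, perhaps compressed via the fold map $\fold$ together with a short list of asymmetric exceptions. We then state that direct enumeration of all triples $x+y=z\le 537$ confirms that no triple is monochromatic, which gives $S(6)\ge 537$.
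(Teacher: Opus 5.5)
Your verification step matches the paper's: list six sets and check every sum $x+y$ (including $x=y$) within each class. The gap is that you never supply the sets. For an existential statement like $S(6)\ge 537$, the certificate is the entire proof, and the paper's proof consists exactly of the six listed classes (of sizes $76,76,88,70,76,151$). What you give instead is a search plan whose success you yourself make conditional (``if any exists nearby'', ``if it fails, we would enlarge the relaxed window''). Nothing in it shows that a sum-free $6$-colouring of $\{1,\dots,537\}$ exists. A description of how one might look for a partition does not establish that one exists, so the statement remains unproved until an explicit colouring is written down and checked.

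The search strategy also rests on a wrong premise. You assume symmetric partitions top out at $536$ and propose breaking symmetry near the ends and the fold. The paper's partition is fully palindromic, $c(x)=c(538-x)$, which is exactly the symmetry $x\leftrightarrow n+1-x$ you plan to relax. Moreover, each of its classes is sum-free even modulo $538$. It also has rigid extra structure: class minima $1,2,3,4,38,151$, and the sixth colour containing all of $[237,301]$. Section~\ref{sec:method} reports that the palindromic families were obtained by keeping reflection symmetry and modular sum-freeness, and (for $\mathcal F_1$, which contains the example) additionally imposing frame $g$ (Definition~\ref{def:frameg}). That narrows the search space rather than enlarging it. So the missing piece is not a symmetry-breaking relaxation but the explicit colouring itself, which the paper exhibits.
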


\begin{proof}
The following six sets are pairwise disjoint, their union is $\{1,\dots,537\}$, and
none of them contains $x$, $y$ and $x+y$. We list every element, as in the
explicit constructions of Fredricksen and Sweet~\cite{fs}. These properties
are verified by checking all sums $x+y$ within each set, including $x=y$.
\par\smallskip
\noindent\begin{minipage}{\linewidth}
\textbf{Color 1 (76 integers).}
\begin{lstlisting}
  1   5  12  15  19  21  23  25  29  32  39  43  45  49  95
102 106 108 112 119 122 126 128 130 132 136 139 146 150 152
156 163 166 170 172 174 176 180 358 362 364 366 368 372 375
382 386 388 392 399 402 406 408 410 412 416 419 426 430 432
436 443 489 493 495 499 506 509 513 515 517 519 523 526 533
537
\end{lstlisting}
\end{minipage}\par\smallskip

\noindent\begin{minipage}{\linewidth}
\textbf{Color 2 (76 integers).}
\begin{lstlisting}
  2   6   7  17  18  22  27  37  41  42  46  51  66  70  81
 89  90 100 101 105 109 110 113 114 124 125 129 133 134 144
145 149 153 157 158 168 169 192 346 369 370 380 381 385 389
393 394 404 405 409 413 414 424 425 428 429 433 437 438 448
449 457 468 472 487 492 496 497 501 511 516 520 521 531 532
536
\end{lstlisting}
\end{minipage}\par\smallskip

\noindent\begin{minipage}{\linewidth}
\textbf{Color 3 (88 integers).}
\begin{lstlisting}
  3   8   9  14  20  24  26  30  31  36  47  53  59  63  65
 69  76  82  86  88  92  98 104 115 120 121 127 131 137 142
143 148 154 159 160 165 171 177 181 182 187 188 193 204 334
345 350 351 356 357 361 367 373 378 379 384 390 395 396 401
407 411 417 418 423 434 440 446 450 452 456 462 469 473 475
479 485 491 502 507 508 512 514 518 524 529 530 535
\end{lstlisting}
\end{minipage}\par\smallskip

\noindent\begin{minipage}{\linewidth}
\textbf{Color 4 (70 integers).}
\begin{lstlisting}
  4  10  11  13  16  28  33  34  35  40  55  57  79  93  94
 96  99 111 116 117 118 123 135 138 140 141 147 155 161 162
164 167 179 185 203 335 353 359 371 374 376 377 383 391 397
398 400 403 415 420 421 422 427 439 442 444 445 459 481 483
498 503 504 505 510 522 525 527 528 534
\end{lstlisting}
\end{minipage}\par\smallskip

\noindent\begin{minipage}{\linewidth}
\textbf{Color 5 (76 integers).}
\begin{lstlisting}
 38  44  48  50  52  54  56  58  60  61  62  64  67  68  71
 72  73  74  75  77  78  80  83  84  85  87  91  97 103 107
173 189 196 197 201 209 212 236 302 326 329 337 341 342 349
365 431 435 441 447 451 453 454 455 458 460 461 463 464 465
466 467 470 471 474 476 477 478 480 482 484 486 488 490 494
500
\end{lstlisting}
\end{minipage}\par\smallskip

\noindent\begin{minipage}{\linewidth}
\textbf{Color 6 (151 integers).}
\begin{lstlisting}
151 175 178 183 184 186 190 191 194 195 198 199 200 202 205
206 207 208 210 211 213 214 215 216 217 218 219 220 221 222
223 224 225 226 227 228 229 230 231 232 233 234 235 237 238
239 240 241 242 243 244 245 246 247 248 249 250 251 252 253
254 255 256 257 258 259 260 261 262 263 264 265 266 267 268
269 270 271 272 273 274 275 276 277 278 279 280 281 282 283
284 285 286 287 288 289 290 291 292 293 294 295 296 297 298
299 300 301 303 304 305 306 307 308 309 310 311 312 313 314
315 316 317 318 319 320 321 322 323 324 325 327 328 330 331
332 333 336 338 339 340 343 344 347 348 352 354 355 360 363
387
\end{lstlisting}
\end{minipage}\par\smallskip

\end{proof}

Theorem~\ref{thm:537} needs only this one partition. Section~\ref{sec:consequences}
lists what it implies for other Schur and Ramsey numbers. In Section~\ref{sec:families} we
describe all $131$ partitions of $\{1,\dots,537\}$ into six sum-free sets that our
search produced.
Section~\ref{sec:method} describes the construction of the families.
The complete partitions and supplementary tables are available at
\url{https://github.com/jeffchen006/Schur-Number-6}.

Throughout, $N=538$, $\fold(x)=\min(x,N-x)$, and a colouring $c$ of
$\{1,\dots,537\}$ is \emph{palindromic} if $c(x)=c(N-x)$ for all $x$. We number the
colours $1,\dots,6$ in the order of their least elements. We write
$C_j=c^{-1}(j)$ for the $j$-th color class.

\section{Consequences for other bounds}\label{sec:consequences}
Classical recurrences of Schur and Abbott--Hanson~\cite{schur,abbott},
and the template constructions of Rowley and Ageron et al.~\cite{rowley,ageron},
turn smaller partitions into larger ones. We use
\[
S(k+2)\ge10S(k)+2\quad\text{\cite{bengone}},\qquad
S(k+5)\ge380S(k)+148\quad\text{\cite{ageron}}.
\]
A sum-free $k$-partition of $[1,n]$ also gives an edge coloring of $K_{n+1}$
without monochromatic triangles: color $ij$ by the class of $|i-j|$.
Consequently, $R_k(3)\ge S(k)+2$.

\begin{corollary}\label{cor:consequences}
The lower bounds in the column headed ``This work'' in Table~\ref{tab:bounds} hold.
\end{corollary}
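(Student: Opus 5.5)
The plan is to plug Theorem~\ref{thm:537} into the two recurrences and the Ramsey inequality quoted above, using the previously known small Schur numbers as base values where needed. Each entry in the ``This work'' column of Table~\ref{tab:bounds} should come out of one such substitution, so the proof is a short chain of arithmetic.

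\textbf{Step 1 (the Ramsey bound).} By the edge-colouring argument above, $R_k(3)\ge S(k)+2$. With $k=6$ and $S(6)\ge537$ this gives $R_6(3)\ge539$.

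\textbf{Step 2 (one application of $S(k+2)\ge10S(k)+2$).} With $k=6$ we get
\[
S(8)\ge 10\cdot537+2=5372 .
\]

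\textbf{Step 3 (the recurrence $S(k+5)\ge380S(k)+148$).} With $k=6$ we get
\[
S(11)\ge 380\cdot537+148=204060+148=204208 .
\]
Applying $S(k+2)\ge10S(k)+2$ to this bound, with $k=11$, gives
\[
S(13)\ge 10\cdot204208+2=2042082 .
\]
Alternatively one could route the argument through $S(8)$ and then use the five-step recurrence. The table's figures indicate that the path above is the one intended, and I would check that each chain reproduces the stated number.

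\textbf{Step 4 (checking the remaining entries).} For any other entries in the table, such as $R_k(3)$ for larger $k$ or $S(k)$ for $k\ge 14$, I would derive them the same way. I would also compare each result with the bound obtained from the old value $S(6)\ge536$, to make sure the new number is a genuine improvement and not already dominated by some other known construction.

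The main obstacle is not the arithmetic. It is confirming that the recurrences as cited apply to arbitrary $k$ with no side conditions, in particular that the Ageron et al.\ template bound holds for all $k$ and does not require, for example, a symmetric base partition. If such a condition were needed, I would verify it for the partition of Theorem~\ref{thm:537}, or for one of the palindromic partitions in Section~\ref{sec:families}.
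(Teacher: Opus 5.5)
Your proposal is correct and is the same as the paper's proof: it makes the same substitutions $S(8)\ge10\cdot537+2$, $S(11)\ge380\cdot537+148$, $S(13)\ge10\cdot S(11)+2$ and $R_6(3)\ge S(6)+2$, and your arithmetic checks out (the alternative route through $S(8)$ would give only the weaker $380\cdot5372+148=2\,041\,508$, so your chain is the right one). The paper also builds the $S(8)$ partition explicitly from the width-$10$ template of Bengone et al., but this is corroboration rather than part of the logic; your Step~4 is unnecessary, since the table lists only these five entries.
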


\begin{table}[tbp]
\caption{Lower bounds in the cited literature and the bounds obtained here.
The six- and eight-color bounds are supported by explicit partitions.}
\label{tab:bounds}
\centering
\begin{tabular}{lrrl}
\toprule
Quantity & Previous bound & This work & Previous source\\
\midrule
$S(6)$    & $536$          & $537$            & Fredricksen--Sweet~\cite{fs}\\
$S(8)$    & $5\,362$       & $5\,372$         & Bengone et al.~\cite{bengone}\\
$S(11)$   & $203\,828$     & $204\,208$       & Ageron et al.~\cite{ageron}\\
$S(13)$   & $2\,038\,282$ & $2\,042\,082$    & Bengone et al.~\cite{bengone}\\
$R_6(3)$  & $538$          & $539$            & Fredricksen--Sweet~\cite{fs,xxer}\\
\bottomrule
\end{tabular}
\end{table}

\begin{proof}
$S(8)\ge10\cdot537+2$, $S(11)\ge380\cdot537+148$,
$S(13)\ge10\cdot S(11)+2\ge10\cdot204\,208+2$, and $R_6(3)\ge S(6)+2$. For $S(8)$ we also
built the partition explicitly: apply the width-$10$ template of
\cite[Table~II]{bengone} to the partition of Theorem~\ref{thm:537}, with colours $7$
and $8$ for the labels $A$ and $B$. This gives a partition of
$\{1,\dots,5372\}$ into eight sum-free sets.
\end{proof}

Bounds for larger $k$ that are derived through these recurrences change in the same
way; we list only the four cases in which we checked the previously published
values.

\section{The 537-partitions and their families}\label{sec:families}
The example above is palindromic, and each of its classes is even sum-free modulo
$538$. The least elements of its classes are $1,2,3,4,38,151$, and the sixth class
contains the whole interval $[237,301]$. These features are captured by the
following family of colourings.

\begin{definition}\label{def:frameg}
A colouring $c\colon\{1,\dots,537\}\to\{1,\dots,6\}$ whose classes are sum-free
modulo $538$ is in \emph{frame $g$} if, with $Q=151$ and $d=38$,
\begin{align*}
 \fold(x)<j &\ \Rightarrow\ c(x)\neq j \quad (j=1,2,3,4),\\
 \fold(x)<d \ \text{or}\ Q-d<\fold(x)<Q &\ \Rightarrow\ c(x)\neq 5,\\
 \fold(x)\in\{d,\,N-2Q\} &\ \Rightarrow\ c(x)=5,\\
 \fold(x)<Q &\ \Rightarrow\ c(x)\neq 6,\\
 \fold(x)=Q \ \text{or}\ \fold(x)>N-2Q &\ \Rightarrow\ c(x)=6.
\end{align*}
\end{definition}

In words: the sixth colour does not occur below $151$ and fills the middle of the
interval, and the fifth colour does not occur below $38$ or just below $151$.

The first class of the example has a simple description as well. Put
\begin{align*}
A&=\{1,5,12,15,19,21,23,25,29,32,39,43,45,49\},\\
B&=A\cup(151-A)\cup\{151+a: a\in A,\ a\le 29\}\cup\{95\},
\end{align*}
a set of $38$ numbers, and $C^*=\{x\in\{1,\dots,537\}:\fold(x)\in B\}$, a set of $76$
numbers. The first class of the example is exactly $C^*$.

\begin{proposition}\label{prop:families}
The $131$ partitions of $\{1,\dots,537\}$ into six sum-free sets listed in
the supplementary material are pairwise distinct, every class of each of them is
sum-free modulo $538$, and no one of them is obtained from another by a map
$x\mapsto rx \bmod 538$ with $\gcd(r,538)=1$ followed by renaming the colours.
They form the five families summarized in Table~\ref{tab:families}.
The example of Theorem~\ref{thm:537} is $\mathcal F_1.001$.
Each partition in $\mathcal F_1$--$\mathcal F_4$ agrees with the example in at least
$462$ of the $537$ positions.
\end{proposition}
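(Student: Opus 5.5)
The proposition is a finite statement about explicitly listed data, so the proof will be a certified computation organized to make each claim checkable. We fix the $131$ partitions as $537$-long colour vectors, with colours renumbered by the order of their least elements as in the convention above. We then verify each claim in turn.

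\textbf{Validity and the mod-$538$ property.} For each partition and each class $C_j$, we check that $(x+y)\bmod 538\notin C_j$ for all $x,y\in C_j$, including $x=y$. This is a little stronger than ordinary sum-freeness. If $x+y\le 537$, the residue is $x+y$ itself, so the mod-$538$ check implies the integer check. If $x+y=538$, the residue $0$ lies in no class and the check is vacuous. We also check that the classes are disjoint and cover $\{1,\dots,537\}$. Each class needs at most $537^2$ operations, which is trivial. Pairwise distinctness is just a comparison of the canonical colour vectors.

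\textbf{Inequivalence under multipliers.} This is the substantive step. The group $(\mathbb Z/538)^\times$ has $\varphi(538)=\varphi(2)\varphi(269)=268$ elements. Because every class is sum-free modulo $538$, $x\mapsto rx$ sends each class to a set that is still sum-free mod $538$. Hence the image of a partition is again a valid partition of $\{1,\dots,537\}$, so the equivalence relation is well defined on the set of such partitions. For each partition $P$ we form its canonical form $\kappa(P)$: the lexicographically least colour vector over the $268$ maps $x\mapsto rx \bmod 538$, each followed by renumbering colours by least element. Two partitions are related by a multiplier and a colour renaming exactly when their canonical forms agree. This holds because renumbering by least element is a canonical choice among renamings, and the multipliers form a group. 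So the claim reduces to checking that the $131$ values $\kappa(P)$ are pairwise distinct, which costs $131\cdot268\cdot537$ elementary steps.

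\textbf{Families, identification, and agreement.} We check that $\mathcal F_1.001$ coincides entry-by-entry with the partition displayed in the proof of Theorem~\ref{thm:537}. We verify the family assignment against the defining features recorded in Table~\ref{tab:families}, for instance membership in frame $g$ (Definition~\ref{def:frameg}) and whether the first class equals $C^*$. Finally, for each $P\in\mathcal F_1\cup\dots\cup\mathcal F_4$ we count the positions $x$ where $P$ and the example give $x$ the same colour, and confirm the minimum is at least $462$. One subtlety needs care here. Agreement must be measured with colours labelled by least element, as in the paper's convention, and not after an optimal relabelling. I would state this explicitly so the number $462$ is unambiguous.

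The main obstacle is making the inequivalence claim trustworthy rather than the computations themselves. The canonical-form argument above is the cleanest route. I would also have an independent script recompute the whole verification, so the proof rests on data published in the supplementary material.
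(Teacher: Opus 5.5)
Your proposal is correct and is essentially the paper's own proof. The paper's proof is a direct computational check of all sums modulo $538$ within each class, then a comparison of the images under all $268$ units modulo $538$ with colours renumbered by least element, then a count of the quantities in Table~\ref{tab:families} (with $\mathcal F_5$ measured after $x\mapsto 43x \bmod 538$, as the text specifies) and of the agreements with the example. Your lexicographically least canonical form is an efficient way to organize the paper's ``compare all images'' step, and your argument that it detects exactly the multiplier-plus-renaming orbits is sound.
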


\begin{proof}
For each partition, check all sums within each class modulo $538$.
To check inequivalence, compare all images under multiplication by units modulo
$538$, with colors renumbered in the order of their least elements.
The family properties follow by counting the quantities in Table~\ref{tab:families}.
\end{proof}

For Table~\ref{tab:families}, $d_{\rm pal}$ counts pairs $\{x,538-x\}$
with different colors, $\Delta_1=|C_1\mathbin{\triangle}C^*|$, and
\[
h=\bigl|\{y\in\{237,\dots,269\}:\{y,538-y\}\not\subseteq C_6\}\bigr|.
\]
Thus $h$ counts each folded value once if either corresponding position is
outside color 6, including the single central position $y=269$.
Family $\mathcal F_5$ is measured after
$x\mapsto43x\bmod538$ and color renumbering; its original class minima are
$1,2,3,4,7,16$. Families $\mathcal F_1$--$\mathcal F_3$ satisfy frame $g$.
Family $\mathcal F_4$ satisfies the frame with the solid-middle condition removed.
These families describe the retained sample, not all possible 537-partitions.

\begin{table}[tbp]
\caption{The 131 retained partitions. Intervals give the observed minimum and
maximum within a family. Features of $\mathcal F_5$ use the normalization stated
in the text. The quantity $h$ counts folded values, not individual positions.
All entries are computed from the 131 explicit partitions.}
\label{tab:families}
\centering
\begin{tabular}{lrrrr}
\toprule
Family & Count & $d_{\rm pal}$ & $\Delta_1$ & $h$\\
\midrule
$\mathcal F_1$ & 107 & 0 & 0 & 0\\
$\mathcal F_2$ & 14  & 0 & 2 & 0\\
$\mathcal F_3$ & 5   & 2--12 & 0 & 0\\
$\mathcal F_4$ & 4   & 0 & 2--8 & 1--13\\
$\mathcal F_5$ & 1   & 5 & 3 & 5\\
\midrule
Total & 131 & & &\\
\bottomrule
\end{tabular}
\end{table}

\section{Construction of the families}\label{sec:method}
The palindromic families are obtained by imposing reflection symmetry and the
sum-free conditions modulo $538$. For $\mathcal F_1$ and $\mathcal F_2$, we also
impose frame $g$. The two families are distinguished by whether the first class
is $C^*$ or differs from it in one reflected pair. For $\mathcal F_3$, we retain
the frame conditions but remove reflection symmetry. For $\mathcal F_4$, we
retain reflection symmetry but remove the condition that color 6 contains every
position with $f(x)>236$.

For $\mathcal F_5$, let $c$ be the coloring in Theorem~\ref{thm:537} and put
\[
c_m(x)=c(mx\bmod538),\qquad m\in\{3,25,221\},
\]
with the colors of each $c_m$ renamed to maximize agreement with $c$.
We then require a new coloring $c'$ to satisfy
\[
c'(x)\in\{c_3(x),c_{25}(x),c_{221}(x)\},\qquad 1\le x\le537,
\]
together with sum-freeness modulo $538$. This construction does not impose
reflection symmetry or the solid-middle condition.

The example in Theorem~\ref{thm:537} is sufficient for the new lower bound.
The other partitions describe variations of its structure; they do not form an
exhaustive classification of six-color partitions of $[1,537]$.

\end{document}